 \tikzset{mynode/.style={draw,circle,inner sep=1pt,outer sep=0pt}}
\newtheorem{teo}{Theorem}[section]
\newtheorem{cor}[teo]{Corollary}
\newtheorem{lem}[teo]{Lemma}
\dedicatory{}
\begin{document}

\title{A NOTE ON IDEMPOTENT SEMIRINGS}

\author{George Janelidze}
\address[George Janelidze]{Department of Mathematics and Applied Mathematics, University of Cape Town, Rondebosch 7700, South Africa}

\dedicatory{Dedicated to Themba Dube on the occasion of his 65th birthday}

\thanks{}
\email{george.janelidze@uct.ac.za}

\author{Manuela Sobral}
\address[Manuela Sobral]{CMUC and Departamento de
Matem\'atica, Universidade de Coimbra, 3001--501 Coimbra,
Portugal}
\thanks{
Partially supported by the Centre for Mathematics of
the University of Coimbra -- UID/MAT/00324/2020}
\email{sobral@mat.uc.pt}

\keywords{commutative semiring, non-empty colimit, coreflective subcategory, Boolean algebra, distributive lattice}

\subjclass[2010]{16Y60, 18A30, 18A40, 06E20, 06E75, 06D75}

\begin{abstract}
For a commutative semiring $S$, by an $S$-algebra we mean a commutative semiring $A$ equipped with a homomorphism $S\to A$. We show that the subvariety of $S$-algebras determined by the identities $1+2x=1$ and $x^2=x$ is closed under non-empty colimits. The (known) closedness of the category of Boolean rings and of the category of distributive lattices under non-empty colimits in the category of commutative semirings both follow from this general statement.
\end{abstract}

\date{\today}

\maketitle

\section{Introduction}

Let us recall:\vspace{1mm}

\textbf{1.1.} A \emph{commutative semiring} is an algebraic structure of the form $S=(S,0,+,1,\cdot)$ in which $(S,0,+)$ and $(S,1,\cdot)$ are commutative monoids with $$x0=0,\,\,\,x(y+z)=xy+xz$$ for all $x,y,z\in S$. Here and below we use standard notational agreements: e.g. $xy+xz$ means $(x\cdot y)+(x\cdot z)$. The category of commutative semirings will be denoted by $\mathsf{CSR}$; as expected, its morphisms are semiring homomorphisms, that is, maps $f:S\to S'$ of (commutative) semirings with $$f(0)=0,\,\,\,f(s+t)=f(s)+f(t),\,\,\,f(1)=1,\,\,\,f(st)=f(s)f(t)$$ for all $s,t\in S$.

\textbf{1.2.} For a (commutative) semiring $S$, an $S$-\emph{module} (or $S$-\emph{semimodule}) is an algebraic structure of the form $A=(A,0,+,h)$ in which $(A,0,+)$ is a commutative monoid and $h:S\times A\to A$ is a map written as $(s,a)\mapsto sa$ that has $$1a=a,\,\,\,s(ta)=(st)a,\,\,\,s0=0,\,\,\,s(a+b)=sa+sb,\,\,\,0a=0,\,\,\,(s+t)a=sa+ta$$ for all $s,t\in S$ and $a,b\in A$. The category of $S$-modules will be denoted by $S$-$\mathsf{mod}$.

\textbf{1.3.} For $S$ above, a \emph{commutative $S$-algebra} is a pair $(A,f)$ in which $A$ is a commutative semiring, and $f:S\to A$ is a semiring homomorphism. Accordingly, the category of commutative $S$-algebras is just the comma category $(S\downarrow\mathsf{CSR})$. Equivalently, a commutative $S$-algebra can be defined as an algebraic structure of the form $A=(A,0,+,h,1,\cdot)$ in which $A=(A,0,+,h)$ is an $S$-module and $A=(A,0,+,1,\cdot)$ is a commutative semiring with $s(ab)=(sa)b$ for all $s\in S$ and $a,b\in A$; we will then have $sa=f(s)a$.\vspace{1mm}

\textbf{1.4.} If $S$ is a commutative ring, then $(S\downarrow\mathsf{CSR})$ is the ordinary category of commutative (unital) $S$-algebras. In particular: if $S=\mathbb{Z}$ is the ring of integers, then $(S\downarrow\mathsf{CSR})$ is the category $\mathsf{CRings}$ of commutative rings; if $S=\{0,1\}$ with $1+1=0$, then $(S\downarrow\mathsf{CSR})$ is the category $\mathsf{CRings}_2$ of commutative rings of characteristic $2$ (=the category of commutative algebras over the two-element field). The category $\mathsf{CRings}_2$ contains the category $\mathsf{BRings}$ of Boolean rings (=commutative rings satisfying the identity $x^2=x$).

\textbf{1.5.} If $S=\{0,1\}$ with $1+1=1$, then $(S\downarrow\mathsf{CSR})$ is the category $\mathsf{AICSR}$ of additively idempotent commutative semirings (=commutative semirings satisfying the identity $2=1$, or, equivalently, the identity $2x=x$). This category contains the category $\mathsf{DLat}$ of distributive lattices.\vspace{1mm}

In this paper we present a general result (Corollary 2.3 in the next section) on $S$-semialgebras, which implies (known) closedness of the categories of Boolean rings and of distributive lattices under non-empty colimits (or, equivalently, just under binary coproducts) in the category of commutative semirings.

\section{The general case}

Let us first mention an obvious general fact:

\begin{lem}
	Let $\mathcal{V}$ be a variety of universal algebras, $\mathcal{W}$ a subvariety of $\mathcal{V}$, and $I$ the initial object of $\mathcal{W}$ (=the free algebra in $\mathcal{W}$ on the empty set). If $\mathcal{W}\approx(I\downarrow\mathcal{W})$ is coreflective in $(I\downarrow\mathcal{V})$, then $\mathcal{W}$ is closed under non-empty colimits in $\mathcal{V}$.\qed
\end{lem}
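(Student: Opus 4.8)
The plan is to realize the inclusion $\mathcal W\hookrightarrow\mathcal V$ as the composite of the inclusion $i\colon(I\downarrow\mathcal W)\hookrightarrow(I\downarrow\mathcal V)$ followed by the forgetful functor $U\colon(I\downarrow\mathcal V)\to\mathcal V$, using the identification $\mathcal W\approx(I\downarrow\mathcal W)$ (valid because $I$ is initial in $\mathcal W$). Both $\mathcal V$ and $\mathcal W$ are cocomplete, being varieties, and hence so are the two comma categories. Since $(I\downarrow\mathcal W)$ is assumed coreflective in $(I\downarrow\mathcal V)$, the functor $i$ is a left adjoint and therefore preserves all colimits; consequently $(I\downarrow\mathcal W)$ is closed under colimits in $(I\downarrow\mathcal V)$. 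It remains to understand how $U$ interacts with colimits, and this is where the real work lies.

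The apparent obstacle is that for a general object $I$ the forgetful functor $U\colon(I\downarrow\mathcal V)\to\mathcal V$ preserves connected colimits but not arbitrary ones: for instance a coproduct in $(I\downarrow\mathcal V)$ is a pushout over $I$ in $\mathcal V$, not a coproduct. The observation that removes this obstacle is that $I$, being the free $\mathcal W$-algebra on the empty set, is a quotient of the initial object $I_{\mathcal V}$ of $\mathcal V$ (the free $\mathcal V$-algebra on the empty set): the canonical morphism $e\colon I_{\mathcal V}\to I$ is surjective, hence an epimorphism. It follows that any object $A$ of $\mathcal V$ admits at most one morphism $I\to A$, since two such morphisms become equal after precomposition with the epimorphism $e$ (both equalling the unique morphism $I_{\mathcal V}\to A$); in particular every $\mathcal V$-morphism between objects receiving a map from $I$ automatically commutes with these, so $U$ identifies $(I\downarrow\mathcal V)$ with the full subcategory $\mathcal V_I\subseteq\mathcal V$ spanned by the objects that receive a (necessarily unique) morphism from $I$. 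This subcategory is closed under non-empty colimits in $\mathcal V$: if $D\colon J\to\mathcal V_I$ is a diagram with $J$ non-empty, then picking any $j_0\in J$ gives a morphism $I\to D(j_0)\to\operatorname{colim}_{\mathcal V}D$, so the colimit again lies in $\mathcal V_I$. Hence $U$ preserves non-empty colimits.

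With these pieces in place I would finish as follows. Let $D\colon J\to\mathcal W$ be a diagram with $J$ non-empty, viewed in $\mathcal V$; the goal is to show $\operatorname{colim}_{\mathcal V}D\in\mathcal W$. Because $I$ is initial in $\mathcal W$ there is a unique morphism $I\to D(j)$ for each $j$, and uniqueness makes these natural in $j$, so $D$ lifts to $\tilde D\colon J\to(I\downarrow\mathcal W)$. Forming $\operatorname{colim}_{(I\downarrow\mathcal W)}\tilde D$, the coreflectivity step shows that its image under $i$ is $\operatorname{colim}_{(I\downarrow\mathcal V)}\tilde D$ and lies in $(I\downarrow\mathcal W)$. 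Applying $U$ and using that it preserves the non-empty colimit, we get $U\big(\operatorname{colim}_{(I\downarrow\mathcal V)}\tilde D\big)=\operatorname{colim}_{\mathcal V}D$; since the left-hand side is the underlying object of something in $(I\downarrow\mathcal W)\approx\mathcal W$, we conclude $\operatorname{colim}_{\mathcal V}D\in\mathcal W$. The hypothesis that $J$ be non-empty is essential and enters in two places: in producing the lifting cocone from $I$, and in the closure of $\mathcal V_I$ under colimits; indeed for the empty diagram $\operatorname{colim}_{\mathcal V}D$ is the initial object $I_{\mathcal V}$, which in general is not an object of $\mathcal W$.
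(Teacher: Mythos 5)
Your proof is correct. There is nothing in the paper to compare it against: the authors state this lemma as ``an obvious general fact'' with no proof at all, so your write-up is a genuine filling-in rather than a variant of a printed argument. It is worth noting that your proof isolates exactly the one step that is \emph{not} formally obvious, namely the treatment of non-connected colimits. The forgetful functor $(I\downarrow\mathcal{V})\to\mathcal{V}$ preserves connected colimits for any $I$, but the lemma claims closure under all non-empty colimits --- and the paper explicitly cares about this, since it remarks that closure under non-empty colimits is equivalent to closure under binary coproducts, which are precisely the non-connected case. Your observation that $I$, being the free $\mathcal{W}$-algebra on the empty set, receives a surjection (hence epimorphism) from the initial object $I_{\mathcal{V}}$ of $\mathcal{V}$, so that $I$ is subinitial and $(I\downarrow\mathcal{V})$ becomes a full subcategory of $\mathcal{V}$ closed under non-empty colimits, is exactly what bridges this gap; it is the abstract form of the familiar fact that tensoring over an epimorphic quotient of the initial (semi)ring agrees with tensoring over the initial one. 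The rest of your argument (coreflectivity makes the inclusion $i$ a left adjoint, hence colimit-preserving; the lift of a diagram in $\mathcal{W}$ to $(I\downarrow\mathcal{W})$ via initiality of $I$ in $\mathcal{W}$; the remark that the empty colimit $I_{\mathcal{V}}$ is the obstruction to dropping non-emptiness) is the routine part, and you execute it correctly.
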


Then we take:
\begin{itemize}
	\item $\mathcal{V}=(S\downarrow\mathsf{CSR})$;
	\item $\mathcal{W}=(S\downarrow\mathsf{CSR})^*$ to be the subvariety of $(S\downarrow\mathsf{CSR})$ consisting of all $S$-algebras satisfying the identities $1+2x=1$ and $x^2=x$. This makes $I\approx S/E$, where $E$ is the smallest congruence on $S$ containing $(1+2s,1)$ and $(s^2,s)$ for each $s\in S$.
\end{itemize}
\begin{teo}
	Let $I\approx S/E$ be as above. The variety $$(S\downarrow\mathsf{CSR})^*\approx(I\downarrow(S\downarrow\mathsf{CSR})^*)$$ of commutative $S$-algebras satisfying the identities $1+2x=1$ and $x^2=x$ is a coreflective subcategory of $(I\downarrow\mathsf{CSR})\approx(I\downarrow(S\downarrow\mathsf{CSR}))$.
\end{teo}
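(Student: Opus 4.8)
The plan is to work entirely inside $(I\downarrow\mathsf{CSR})$ and, for each $I$-algebra $A$, to exhibit an explicit coreflection into $\mathcal{W}=(S\downarrow\mathsf{CSR})^*$. The natural candidate is the subset
$$A^*=\{a\in A\mid 1+2a=1\text{ and }a^2=a\},$$
with the operations inherited from $A$. Once $A^*$ is known to be a sub-$I$-algebra of $A$, it lies in $\mathcal{W}$ automatically, since by construction each of its elements satisfies the two defining identities; the inclusion $A^*\hookrightarrow A$ will then be shown to be universal among homomorphisms into $A$ from objects of $\mathcal{W}$.

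The heart of the argument --- and the step I expect to be the only genuine obstacle --- is to check that $A^*$ is closed under the semiring operations, since idempotency of a sum and the additive relation for a product are not evident. First I would record the auxiliary identity obtained by multiplying $1+2a=1$ by $b$, namely $b+2ab=b$, valid for all $a,b\in A^*$; it yields at once $2b+2ab=2b$. With this in hand the closure computations are short. For a product, $(ab)^2=a^2b^2=ab$, while $1+2ab=(1+2b)+2ab=1+2b=1$. For a sum, $1+2(a+b)=(1+2a)+2b=1+2b=1$, while $(a+b)^2=a+2ab+b=a+(b+2ab)=a+b$. Closure under the remaining data is easier: the structure map $\iota\colon I\to A$ sends every $s\in I$ into $A^*$, because $\iota(s)^2=\iota(s^2)=\iota(s)$ and $1+2\iota(s)=\iota(1+2s)=\iota(1)=1$ --- here one uses that $I$ itself satisfies the identities. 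In particular $0,1\in A^*$, and since $A^*$ is closed under multiplication the $I$-action $s\cdot a=\iota(s)a$ preserves it, so $A^*$ is indeed a sub-$I$-algebra.

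It then remains to verify the universal property. Given any $B\in\mathcal{W}$ and any $I$-algebra homomorphism $f\colon B\to A$, each value satisfies $f(b)^2=f(b^2)=f(b)$ and $1+2f(b)=f(1+2b)=f(1)=1$, so $f(B)\subseteq A^*$ and $f$ factors, uniquely because the inclusion is a monomorphism, through a homomorphism $B\to A^*$. This exhibits $A^*\hookrightarrow A$ as the coreflection of $A$, so the inclusion of $\mathcal{W}$ in $(I\downarrow\mathsf{CSR})$ has a right adjoint, as claimed. Finally I would observe that the identifications $(S\downarrow\mathsf{CSR})^*\approx(I\downarrow(S\downarrow\mathsf{CSR}))^*$ and $(I\downarrow\mathsf{CSR})\approx(I\downarrow(S\downarrow\mathsf{CSR}))$ are the routine ones arising from $I=S/E$ being the universal quotient of $S$ satisfying the two identities, so that coreflectivity over $I$ is precisely the assertion of the theorem.
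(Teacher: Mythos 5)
Your proposal is correct and takes essentially the same approach as the paper: both arguments exhibit $A^*=\{a\in A\mid 1+2a=1\ \&\ a^2=a\}$ as a subalgebra (closed under $+$, $\cdot$, the constants, and the $I$-action via the image of $I$ in $A$) and deduce that its inclusion is the coreflection. Your auxiliary identity $b+2ab=b$ and your explicit check of the universal property merely spell out steps the paper performs by direct factoring or leaves implicit.
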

\begin{proof}
	It suffices to show that, for each $A\in(I\downarrow\mathcal{V})$, the set $$A'=\{a\in A\mid 1+2a=1\,\,\&\,\,a^2=a\}$$ is a subalgebra of $A$, that is, to show the following:
	\begin{itemize}
		\item [(i)] $a,b\in A'\Rightarrow a+b\in A'$;
		\item [(ii)] for each $s\in S$, $a\in A'\Rightarrow sa\in A'$;
		\item [(iii)] $1\in A'$;
		\item [(iv)] $a,b\in A'\Rightarrow ab\in A'$;
	\end{itemize}
	Indeed,	(i): Suppose $a$ and $b$ are in $A'$. Then $1+2(a+b)=1+2a+2b=1+2b=1$ and $(a+b)^2=a^2+2ab+b^2=a+2ab+b=a(1+2b)+b=a+b$.
	
	(iii): $1+2\cdot1=1$ since this equality holds in $I$.
	
	(iv): Suppose $a$ and $b$ are in $A'$. Then $$1+2ab=1+2a+2ab=1+a+a(1+2b)=1+a+a=1+2a=1$$ and $(ab)^2=a^2b^2=ab$.
	
	(ii) follows from (iv) since $sa=(s1)a$ and $(s1)$ is in $A'$ (since $s1$ is the image of class of $s$ under the homomorphism $I\to A$).
\end{proof}
From Lemma 3.1 and Theorem 3.2 we obtain:
\begin{cor}
	The variety $(S\downarrow\mathsf{CSR})^*$ of commutative $S$-algebras satisfying the identities $1+2x=1$ and $x^2=x$ is closed under non-empty colimits in the variety $(S\downarrow\mathsf{CSR})$ of all commutative $S$-algebras.\qed
\end{cor}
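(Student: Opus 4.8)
The plan is to read the statement as a direct application of Lemma 3.1, with Theorem 3.2 supplying the single nontrivial hypothesis. First I would fix the instantiation demanded by Lemma 3.1: set $\mathcal{V}=(S\downarrow\mathsf{CSR})$, set $\mathcal{W}=(S\downarrow\mathsf{CSR})^*$ (the subvariety determined by $1+2x=1$ and $x^2=x$), and set $I\approx S/E$ for the congruence $E$ introduced before the theorem. Since $\mathcal{W}$ is a subvariety of $\mathcal{V}$ defined by additional identities in the same signature, the abstract setup of Lemma 3.1 applies with no change.

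Next I would confirm that the hypotheses of Lemma 3.1 hold. That $I\approx S/E$ is the initial object of $\mathcal{W}$ is exactly the bullet recorded before Theorem 3.2: $E$ is the smallest congruence on the initial $\mathcal{V}$-algebra $S$ forcing the two identities, so $S/E$ is the free $\mathcal{W}$-algebra on the empty set. It follows at once that $\mathcal{W}\approx(I\downarrow\mathcal{W})$, every $\mathcal{W}$-algebra receiving a unique map from $I$. The coreflectivity required by Lemma 3.1 — namely that $\mathcal{W}\approx(I\downarrow\mathcal{W})$ be coreflective in $(I\downarrow\mathcal{V})$ — is then precisely the conclusion of Theorem 3.2.

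The only point that warrants a second glance is the identification $(I\downarrow\mathcal{V})\approx(I\downarrow\mathsf{CSR})$ implicit in matching the two results: since $S$ is the initial commutative $S$-algebra and the structure map $S\to I$ is part of the data of $I$, any semiring under $I$ inherits a canonical $S$-algebra structure via $S\to I\to A$, yielding the equivalence $(I\downarrow(S\downarrow\mathsf{CSR}))\approx(I\downarrow\mathsf{CSR})$ already asserted in Theorem 3.2. With the hypotheses thus verified, Lemma 3.1 immediately gives that $(S\downarrow\mathsf{CSR})^*$ is closed under non-empty colimits in $(S\downarrow\mathsf{CSR})$. I expect no genuine obstacle here: all the substance lives in Theorem 3.2 (the verification that the fixed-point set $A'$ is a subalgebra), and the corollary is a purely formal assembly of that theorem with Lemma 3.1, the only delicacy being the comma-category bookkeeping just described.
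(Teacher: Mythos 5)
Your proposal is correct and matches the paper's own argument exactly: the corollary is stated as an immediate consequence of the Lemma and the Theorem, with precisely the instantiation $\mathcal{V}=(S\downarrow\mathsf{CSR})$, $\mathcal{W}=(S\downarrow\mathsf{CSR})^*$, $I\approx S/E$ that you describe. Your explicit verification of the hypotheses (initiality of $S/E$ in the subvariety and the identification $(I\downarrow(S\downarrow\mathsf{CSR}))\approx(I\downarrow\mathsf{CSR})$) merely spells out the bookkeeping the paper leaves implicit.
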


Taking $S$ to be the ring of natural numbers, we obtain the following special cases of Theorem 2.2 and Corollary 2.3:

\begin{cor}
	The variety $\mathsf{CSR}^*$ of commutative semirings satisfying the identities $1+2x=1$ and $x^2=x$ is coreflective in the variety $(\{0,1,2\}\downarrow\mathsf{CSR})$, where $1+2=1$ in $\{0,1,2\}$.\qed
\end{cor}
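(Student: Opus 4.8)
The plan is to recognize this corollary as the single instance $S=\mathbb{N}$ of Theorem 2.2, so that essentially all of the work has already been done and what remains is to match the two generic comma categories appearing there with the two concrete categories in the statement. Concretely, I would substitute $S=\mathbb{N}$ (the initial commutative semiring, which is the "ring of natural numbers" of the statement) into the conclusion of Theorem 2.2 — that $(S\downarrow\mathsf{CSR})^*$ is coreflective in $(I\downarrow\mathsf{CSR})$ — and then verify the two identifications $(\mathbb{N}\downarrow\mathsf{CSR})^*\approx\mathsf{CSR}^*$ and $I\approx\{0,1,2\}$.

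First I would use that $\mathbb{N}=(\mathbb{N},0,+,1,\cdot)$ is the initial object of $\mathsf{CSR}$: every commutative semiring receives a unique homomorphism from $\mathbb{N}$, so the forgetful functor $(\mathbb{N}\downarrow\mathsf{CSR})\to\mathsf{CSR}$ is an isomorphism of categories. Under this isomorphism the defining identities $1+2x=1$ and $x^2=x$ carve out exactly $\mathsf{CSR}^*$ on the right and $(\mathbb{N}\downarrow\mathsf{CSR})^*$ on the left, so $(\mathbb{N}\downarrow\mathsf{CSR})^*\approx\mathsf{CSR}^*$. This disposes of the left-hand category with no real computation.

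The one genuine computation — and the step I expect to be the main obstacle — is identifying $I=\mathbb{N}/E$, where $E$ is the smallest congruence containing $(1+2s,1)$ and $(s^2,s)$ for all $s\in\mathbb{N}$. The generator with $s=1$ gives $3\sim_E 1$; adding an arbitrary $m\ge 1$ to this relation yields $m+2\sim_E m$, so every $n\ge 1$ is $E$-equivalent to $1$ or to $2$ according to its parity, and together with the trivial $0\sim_E 0$ this shows that $\mathbb{N}/E$ has at most the three classes $[0],[1],[2]$. To see that there are exactly three and to pin down the structure, I would exhibit the map $\varphi:\mathbb{N}\to\{0,1,2\}$ sending $0\mapsto 0$, each odd number to $1$, and each even number $\ge 2$ to $2$; one checks directly that $\varphi$ is a surjective homomorphism onto $\{0,1,2\}$ with $1+2=1$ and that it sends both sides of every generator of $E$ to the same value, so $E\subseteq\ker\varphi$. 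Then the induced homomorphism $\mathbb{N}/E\to\{0,1,2\}$ is surjective with a domain of at most three elements, hence a bijection, giving $I=\mathbb{N}/E\approx\{0,1,2\}$ with $1+2=1$. Substituting both identifications into the conclusion of Theorem 2.2 then yields that $\mathsf{CSR}^*$ is coreflective in $(\{0,1,2\}\downarrow\mathsf{CSR})$, as claimed.
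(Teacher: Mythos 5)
Your proposal is correct and is essentially the paper's own argument: the paper proves this corollary precisely by instantiating Theorem 2.2 at $S=\mathbb{N}$ (stating only ``taking $S$ to be the ring of natural numbers\ldots'' with no further detail). You additionally spell out the two identifications the paper leaves implicit --- that $(\mathbb{N}\downarrow\mathsf{CSR})\approx\mathsf{CSR}$ by initiality of $\mathbb{N}$, and that $I=\mathbb{N}/E\approx\{0,1,2\}$ with $1+2=1$ via the parity homomorphism --- and both verifications are sound.
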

\begin{cor}
	The variety $\mathsf{CSR}^*$ above is closed under non-empty colimits in $\mathsf{CSR}$.\qed
\end{cor}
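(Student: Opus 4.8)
The plan is to recognize Corollary 2.5 as nothing more than the special case $S=\N$ of Corollary 2.3, where $\N$ is the semiring of natural numbers. The one structural fact that makes this work is that $\N$ is the initial object of $\mathsf{CSR}$: every commutative semiring $A$ admits a unique homomorphism $\N\to A$, so the forgetful functor $(\N\downarrow\mathsf{CSR})\to\mathsf{CSR}$ is an isomorphism of categories. Consequently a diagram has a colimit in $(\N\downarrow\mathsf{CSR})$ exactly when its image does in $\mathsf{CSR}$, and the two colimits coincide; in particular ``closed under non-empty colimits in $(\N\downarrow\mathsf{CSR})$'' and ``closed under non-empty colimits in $\mathsf{CSR}$'' say the same thing. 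Under this identification $(\N\downarrow\mathsf{CSR})^*=\mathsf{CSR}^*$, so Corollary 2.3 at $S=\N$ is literally the assertion of Corollary 2.5.

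Before invoking Corollary 2.3 I would pin down the initial object $I\approx\N/E$, where $E$ is the smallest congruence forcing $1+2x=1$ and $x^2=x$ on every element. The relation $1+2\cdot 1=1$ already collapses all odd numbers to $1$ and all even numbers $\ge 2$ to $2$, leaving the three classes $\{0\},\{1,3,5,\dots\},\{2,4,6,\dots\}$; this yields $I=\{0,1,2\}$ with $1+2=1$ (so $1+1=2$, $2+2=2$, $2\cdot 2=2$), exactly the semiring named in Corollary 2.4, and one checks at once that $0,1,2$ are idempotent and satisfy $1+2x=1$, so $I\in\mathsf{CSR}^*$. Its initiality in $\mathsf{CSR}^*$ is automatic, since the unique map $\N\to A$ factors through $\N/E$ for every $A\in\mathsf{CSR}^*$.

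With $I$ identified, the equivalent direct route is to apply Lemma 2.1 with $\mathcal V=\mathsf{CSR}$ and $\mathcal W=\mathsf{CSR}^*$ (a subvariety of $\mathsf{CSR}$, being cut out by the identities $1+2x=1$ and $x^2=x$), whose initial object is the $I$ just described. The hypothesis of Lemma 2.1 — that $\mathcal W\approx(I\downarrow\mathcal W)$ be coreflective in $(I\downarrow\mathcal V)=(\{0,1,2\}\downarrow\mathsf{CSR})$ — is precisely Corollary 2.4, which in turn is the $S=\N$ instance of Theorem 2.2. Lemma 2.1 then delivers the desired closedness.

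I do not expect a genuine obstacle: all the substance sits in Theorem 2.2, already established by showing that $A'=\{a\mid 1+2a=1\ \&\ a^2=a\}$ is always a subalgebra, which supplies the coreflection. The only points needing care are bookkeeping — that the natural numbers really are initial in $\mathsf{CSR}$, so that $(\N\downarrow\mathsf{CSR})\cong\mathsf{CSR}$ and non-empty colimits transport unchanged, and the elementary computation identifying $\N/E$ with $\{0,1,2\}$ — and neither is more than a routine check.
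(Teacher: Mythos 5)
Your proposal is correct and is essentially the paper's own argument: the paper obtains this corollary precisely as the special case $S=\mathbb{N}$ of Corollary 2.3 (equivalently, Lemma 2.1 combined with Corollary 2.4), using the identification $(\mathbb{N}\downarrow\mathsf{CSR})\cong\mathsf{CSR}$ coming from the initiality of $\mathbb{N}$. Your explicit verification that $I\approx\mathbb{N}/E=\{0,1,2\}$ with $1+2=1$, and that colimits transport along this identification, just fills in routine details the paper leaves implicit.
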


\section{Boolean rings and distributive lattices}

If an object $A$ of $(\{0,1,2\}\downarrow\mathsf{CSR})$ belongs to $(\{0,1\}\downarrow\mathsf{CSR})$ with $1+1=0$ in $\{0,1\}$ making $(\{0,1\}\downarrow\mathsf{CSR})=\mathsf{CRings}_2$, then $$\{a\in A\mid 1+2a=1\,\,\&\,\,a^2=a\}=\{a\in A\mid 2a=0\,\,\&\,\,a^2=a\}.$$ But if it is the case with $1+1=1$ making $(\{0,1\}\downarrow\mathsf{CSR})=\mathsf{AICSR}$, then $$\{a\in A\mid 1+2a=1\,\,\&\,\,a^2=a\}=\{a\in A\mid 1+a=1\,\,\&\,\,a^2=a\}.$$ Therefore we obtain the commutative diagram
$$\xymatrix{\mathsf{CRings}_2\ar[d]\ar[r]&(\{0,1,2\}\downarrow\mathsf{CSR})\ar[d]&\mathsf{AICSR}\ar[l]\ar[d]\\\mathsf{BRings}\ar[r]&\mathsf{CSR}^*&\mathsf{DLat}\ar[l]}$$ where the horizontal arrows are inclusion functors while the left-hand and right-hand vertical arrows are the coreflections induced by the coreflection of Corollary 2.4 represented by the middle vertical arrow. Since $\mathsf{CRings}_2$ and $\mathsf{AICSR}$ both being of the form $(\{0,1\}\downarrow\mathsf{CSR})$ (with different $1+1$ in $\{0,1\}$) are closed in $\mathsf{CSR}$ under non-empty colimits, we conclude that both $\mathsf{BRings}$ and $\mathsf{DLat}$ are also closed in $\mathsf{CSR}$ under non-empty colimits. That is, as promised in our Introduction, these two known results follow from what we have done in general (in Section 2).

\section{Two additional remarks}

\textbf{4.1.} The Reader might ask, what is special about $(S\downarrow\mathsf{CSR})$? The answer consists of the following observations:
\begin{itemize}
	\item $(S\downarrow\mathsf{CSR})$ is the category of commutative monoids in the monoidal category $S$-$\mathsf{mod}$ having therefore `good' colimits; indeed, its binary coproducts are given by tensor products.
	\item The monoidal category structure of $S$-$\mathsf{mod}$ is determined by the fact that it is a commutative variety of universal algebras.
	\item A commutative variety of universal algebras is semi-additive if and only if it is of the form $S$-$\mathsf{mod}$ for some commutative semiring $S$. This immediately follows from the equivalence $1.\!\Leftrightarrow5.$ in Theorem 2.1 of \cite{[JM1970]}, which refers to \cite{[C1963]} for the proof.
\end{itemize}

\textbf{4.2.} The coreflectivity of $\mathsf{DLat}$ in $\mathsf{AICSR}$ is a `finitary copy' of the coreflectivity of the category of frames in the category of quantales, see Section C1.1 of \cite{[J2002]}: in fact $A_f$ on Page 479 there is the same as our $\{a\in A\mid 1+a=1\,\,\&\,\,a^2=a\}$.

\end{document}